\documentclass{publicadapt}

\usepackage{amsthm}%

\allowdisplaybreaks%

{\theoremstyle{plain}%
 \newtheorem{Thm}{Theorem}

}

{\theoremstyle{remark}

}

{\theoremstyle{definition}

}

\makeatletter

\newcommand{\dotDelta}{{\vphantom{\Delta}\mathpalette\d@tD@lta\relax}}
\newcommand{\d@tD@lta}[2]{%
 \ooalign{\hidewidth$\m@th#1\mkern-1mu\cdot$\hidewidth\cr$\m@th#1\Delta$\cr}%
}
\makeatother

 \title{A further $q$-analogue of Gosper's strange series}

 \author{John M.\ Campbell}
 \address{Department of Mathematics and Statistics, Dalhousie University, 6283 Alumni Crescent, Halifax, NS, B3H 4R2, Canada}
 \email{jh241966@dal.ca}

 \author{Yuka Yamaguchi}
 \address{Faculty of Education, University of Miyazaki, 
 1-1 Gakuen Kibanadai-nishi, Miyazaki, 889-2192, Japan}
 \email{y-yamaguchi@miyazaki-u.ac.jp}
 \thanks{The second author gratefully acknowledges the support of JSPS KAKENHI Grant Number JP25KJ0266.}

 \date{Mar 24, 2026}

 \keywords{Gosper's strange series, basic hypergeometric series, $q$-analogue}
 \subjclass{33D15}

\begin{document}

 \begin{abstract}
 Recently, the second author [Ramanujan J.\ 2026] introduced and proved a $q$-series identity that appears to provide the first 
 known $q$-analogue of an evaluation for a ${}_{2}F_{1}$-series known as \emph{Gosper's strange series}. Yamaguchi's derivation of this 
 $q$-analogue relies on three-term relations for ${}_{2}\phi_{1}$-series along with Heine's transformation of ${}_{2}\phi_{1}$-series. In this 
 note, we introduce and prove, using a $q$-analogue of a 
 series evaluation technique relying on an Abel-type summation lemma, a further $q$-analogue of Gosper's ${}_{2}F_{1}$-identity that is 
 inequivalent to Yamaguchi's $q$-analogue, and we also apply this technique to construct an alternative and simplified proof of 
 Yamaguchi's $q$-analogue, together with a ${}_{3}\phi_{2}$-series variant of Heine's $q$-analogue
 of Gauss's hypergeometric formula, a ${}_{6}\phi_{5}$-series variant and two ${}_{4}\phi_{3}$-series variants 
 of the $q$-analogue of Kummer's identity due to Bailey and Daum, 
 along with 
 a $q$-analogue of a result obtained by Cantarini 
 [Ramanujan J.\ 2022] via Fourier--Legendre theory and related to Ramanujan's first series for $\frac{1}{\pi}$. 
 \end{abstract}

 \maketitle

\section{Introduction}
 Referring to Section \ref{Prelim} for preliminaries on the mathematical notation employed in this article, \emph{Gosper's strange series} 
 refers to the ${}_{2}F_{1}$-series involved in the identity 
\begin{equation}\label{labelGosper}
 {}_2F_1\!\left[
\begin{matrix}
 {1-a}, b \\
 {b+2}
\end{matrix}
; \frac{b}{a+b} 
\right] = 
 (b + 1) \left( \frac{a}{a+b} \right)^{a} 
\end{equation}
 conjectured by Gosper in 1977, with reference to a number of past research contributions on the relation in \eqref{labelGosper}
 \cite{Campbell2023,Chu2017,Ebisu2013,Yamaguchi2026}. 
 In view of the extent of the research interest 
 related to Gosper's identity \eqref{labelGosper}
 and many further series evaluations 
 and evaluation techniques due to Gosper \cite{GosperJr1976,GosperJr1978,Gosper2001,Gosper1991,Gosper1988}, 
 one may consider how the first known $q$-analogue of 
 \eqref{labelGosper} was introduced quite recently, in 2026 \cite{Yamaguchi2026}. 
 This motivates the problem of constructing 
 further $q$-analogues of \eqref{labelGosper}. 

 Yamaguchi \cite{Yamaguchi2026} introduced and proved, in an equivalent way, the remarkable 
 $q$-analogue of \eqref{labelGosper} such that 
\begin{equation}\label{firstqGosper}
{}_2\phi_1\!\left[
\begin{matrix}
q^{1-a}, q^b\\
q^{b+2}
\end{matrix}
;q,\lambda
\right] 
= \frac{1-q^{b+1}}{1-q} 
 \left[
\begin{matrix}
 \lambda q^{-a} \\
 \lambda 
\end{matrix}
;q 
\right]_{\infty}, 
\end{equation}
 where $\lambda=(1-q^b)q^{a+1} /(1-q^{a+b})$ and $| \lambda | < 1$. 
 Setting $q \to 1$ on both sides of Yamaguchi's identity, 
 we obtain an equivalent version of 
 Gosper's identity in \eqref{firstqGosper}. 
 The relation in \eqref{firstqGosper} 
 was derived using three-term relations for ${}_{2}\phi_{1}$-series, 
 building on Yamaguchi's previous work \cite{Yamaguchi2022}, 
 together with an application of Heine's transformation of ${}_{2}\phi_{1}$-series~\cite{Heine1847}. 
 Through a $q$-analogue of a series evaluation technique 
 given by Campbell and Cantarini \cite{CampbellCantarini2022} 
 and relying on a modified Abel lemma, 
 we obtain a simplified proof of 
 Yamaguchi's $q$-analogue, together with a new $q$-analogue of Gosper's identity
 that is inequivalent to 
 Yamaguchi's $q$-analogue. 
 We also show how a similar approach can be applied much more broadly in the construction
 of $q$-analogues, as in Sections \ref{secGauss}--\ref{secCantarini} below. 

\subsection{Preliminaries}\label{Prelim}
 The \emph{shifted factorial} is such that $(x)_{0} = 1$ and $(x)_{n} = x(x+1) \cdots (x + n - 1) $ for positive integers $n$. 
 \emph{Generalized hypergeometric series} may then be defined so that 
$$ 
 {}_rF_s\!\left[
\begin{matrix}
 a_{1}, a_{2}, \ldots, a_{r} \\
 b_{1}, b_{2}, \ldots, b_{s} 
 \end{matrix}
 ; x 
\right] 
 = \sum_{n=0}^{\infty} \frac{ \left( a_{1} \right)_{n} 
 \left( a_{2} \right)_{n} \cdots \left( a_{r} \right)_{n} }{ 
 \left( b_{1} \right)_{n} \left( b_{2} \right)_{n} \cdots \left( b_{s} \right)_{n} } 
 \frac{x^{n}}{n!}, 
$$ 
 referring to Bailey's classic text for background \cite{Bailey1964}. 

 For $|q|<1$, we recall the standard notation for $q$-shifted factorials. 
 For a complex number $a$, the \emph{$q$-shifted factorial} is defined by
\begin{align*}
(a;q)_0 &=1, \\
(a;q)_n &= \prod_{k=0}^{n-1}(1-aq^k), \qquad n\ge1,\\
(a;q)_\infty &= \prod_{k=0}^{\infty}(1-aq^k).
\end{align*}
 We thus obtain a $q$-analogue of the shifted factorial 
 in the sense that 
 $\lim_{q \to 1} \frac{ \left( q^{a};q \right)_{n} }{\left( 1 - q \right)^{n}}
 = \left( a \right)_{n}$. 
 \emph{Basic hypergeometric series} of the form ${}_r\phi_{r-1}$ may then be defined so that 
$$
{}_r\phi_{r-1}\!\left[
\begin{matrix}
a_1, a_2, \dots, a_r\\
b_1, b_2, \dots, b_{r-1}
\end{matrix}
;q, x
\right] 
=
\sum_{n=0}^{\infty}
 \frac{ \left( a_{1}; q \right)_{n} 
 \left( a_{2}; q \right)_{n} \cdots \left( a_{r}; q \right)_{n} }{ 
 \left( q; q \right)_{n} \left( b_{1}; q \right)_{n} \left( b_{2}; q \right)_{n} \cdots \left( b_{r-1}; q \right)_{n} } x^n, 
 $$
where it is assumed that $|q| < 1$. 
For brevity, we also write
$$ \left[
\begin{matrix}
a_1, a_2, \dots, a_r \\
b_1, b_2, \dots, b_s 
\end{matrix}
;q 
\right]_{n} = \frac{ \left( a_1; q \right)_{n} 
 \left( a_2; q \right)_{n} \cdots \left( a_r; q \right)_{n} }{ 
 \left( b_1; q \right)_{n} 
 \left( b_2; q \right)_{n} \cdots \left( b_s; q \right)_{n}}$$
 and 
$$ \left[
\begin{matrix}
a_1, a_2, \dots, a_r \\
b_1, b_2, \dots, b_s 
\end{matrix}
;q 
\right]_{\infty} = \frac{ \left( a_1; q \right)_{\infty} 
 \left( a_2; q \right)_{\infty} \cdots \left( a_r; q \right)_{\infty} }{ 
 \left( b_1; q \right)_{\infty} 
 \left( b_2; q \right)_{\infty} \cdots \left( b_s; q \right)_{\infty}}. $$
 
 For a sequence 
 $(\tau_{n} : n \in \mathbb{N}_{0})$, 
 define the backward difference operator $\nabla$ and 
 the forward difference operator $\dotDelta$ so that 
$ \nabla \tau_{n} = \tau_{n} - 
 \tau_{n-1}$ and $ \dotDelta \tau_{n} = \tau_{n} - \tau_{n + 1}$. 
 Being consistent with a number of research contributions 
 from Chu et al.\ \cite{Chu2007,ChuJia2008,ChuJia2009,ChuWang2009} that have inspired our work, we let the relation 
\begin{equation}\label{split2}
 \sum_{n = 1}^{\infty} B_{n} \nabla A_{n} 
 = \left( \lim_{m \to \infty} A_{m} B_{m + 1} \right) - A_{0} B_{1} + \sum_{n=1}^{\infty} 
 A_{n} \dotDelta B_{n} 
\end{equation}
 be referred to as the \emph{modified Abel lemma on summation by parts}, 
 with the assumption that the above limit exists
 and that the one of the two series in \eqref{split2} converges. 
 The summation lemma in \eqref{split2}
 together with a $q$-version of a 
 technique (reviewed below) based on this lemma due to Campbell and Cantarini \cite{CampbellCantarini2022} 
 provide the keys to our main results given in Section \ref{sectionMain} below. 

 For the purposes of the following discussion, we let $A_{n}$ and $B_{n}$
 be hypergeometric, but we later adapt the following method 
 to $q$-hypergeometric expressions. 
 Defining the rational function $r_{1}(n) = \frac{B_{n+1}}{B_{n}}$, 
 we then set $A_{n} = C_{n} r_2(n)$ for some sequence
 $(C_{n} : n \in \mathbb{N}_{0})$ and for a rational function 
 $r_{2}(n)$ of the form $\frac{1}{a_1 n + a_2}$ or of the form 
 $\frac{a_3 n + a_4}{a_1 n + a_2}$
 for scalars $a_i$ to be determined as follows. 
 With this setup, we rewrite the summand of the latter series in 
 \eqref{split2} so that 
\begin{equation}\label{rewritesummand}
 A_{n} \dotDelta B_{n} = B_{n} C_{n} r_2(n) (1 - r_1(n)), 
\end{equation}
 and we proceed to apply partial fraction decomposition to 
 $r_2(n) (1 - r_1(n))$, yielding a term given by a scalar multiple of
 $\frac{1}{a_1 n + a_2}$. We then set 
 $a_1$ and $a_2$ so that the highlighted term vanishes. 

The first author previously applied the above method to obtain two simplified proofs of Gosper's strange series evaluation \cite{Campbell2023}. 
In Section \ref{sectionMain}, we apply an equivalent version of a $q$-version of this method to 
construct a simplified proof of Yamaguchi's $q$-analogue in \eqref{firstqGosper} 
and to prove a new and inequivalent $q$-analogue of Gosper's identity. 
We further apply this method to obtain variants of Heine's $q$-analogue of Gauss's hypergeometric formula 
and of the $q$-analogue of Kummer's identity due to Bailey and Daum, 
as well as a $q$-analogue of a Fourier--Legendre-derived result from Cantarini~\cite{Cantarini2022}. 

 \section{Main results}\label{sectionMain}
 One may compare the much simplified proof 
 below to 
 Yamaguchi's original derivation \cite{Yamaguchi2026} of \eqref{firstqGosper}. 

\begin{Thm}
 (Yamaguchi, 2026) The $q$-analogue in \eqref{firstqGosper} of Gosper's identity holds \cite{Yamaguchi2026}. 
\end{Thm}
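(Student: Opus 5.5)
We plan to follow the $q$-version of the Campbell--Cantarini strategy sketched in Section~\ref{Prelim}: apply the modified Abel lemma \eqref{split2} to the ${}_2\phi_1$-series on the left of \eqref{firstqGosper}, choosing the splitting so that every leftover series is summable by the $q$-binomial theorem
\[
\sum_{n\ge0}\frac{(x;q)_n}{(q;q)_n}z^n=\frac{(xz;q)_\infty}{(z;q)_\infty},\qquad |z|<1 .
\]
The first step is to isolate the dependence on $b$. Writing
\[
\frac{(q^b;q)_n}{(q^{b+2};q)_n}=\frac{(1-q^b)(1-q^{b+1})}{(1-q^{b+n})(1-q^{b+n+1})},
\]
the summand becomes a $q$-binomial-type term $\frac{(q^{1-a};q)_n}{(q;q)_n}\lambda^n$ times a quadratic rational factor in $u=q^n$. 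This is the $q$-analogue of the rational-function bookkeeping with $r_1,r_2$ in \eqref{rewritesummand}.

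Concretely, I will take
\[
B_n=\frac{(q^{1-a};q)_{n-1}}{(q;q)_{n-1}}\,\lambda^{\,n-1},
\qquad
A_n=\frac{C}{1-q^{b+n}}\ \text{ or }\ A_n=\frac{\alpha+\beta q^n}{1-q^{b+n}},
\]
with the index shift fixed so that $\sum_n B_n\nabla A_n$ reproduces the given series. Indeed, $\nabla\bigl(1/(1-q^{b+n})\bigr)$ is a constant multiple of $q^{n}/\bigl((1-q^{b+n})(1-q^{b+n-1})\bigr)$, which matches the quadratic denominator above. The constants $\alpha,\beta,C$, and possibly a companion geometric-type term, will be chosen so that $B_n\nabla A_n$ equals the original summand exactly. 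Since $\lambda<1$ in modulus and $A_m$ stays bounded, the boundary term $\lim_m A_mB_{m+1}$ vanishes. The term $A_0B_1$ is an explicit rational function of $q^a$ and $q^b$.

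Next I will compute $A_n\dotDelta B_n$. The ratio $B_{n+1}/B_n=\lambda(1-q^{n-a})/(1-q^{n})$ is a rational function of $q^n$, so $A_n\dotDelta B_n$ equals $B_n$ times a rational function of $u=q^n$ with denominator $(1-q^{b+n})(1-q^n)$. Partial fractions in $u$ produce a term proportional to $1/(1-q^{b+n})$ with a coefficient depending on $\lambda$, $q^a$ and $q^b$. The key point, and the step I expect to be the main obstacle, is to verify that this coefficient vanishes exactly when $\lambda=(1-q^b)q^{a+1}/(1-q^{a+b})$. This plays the role of choosing $a_1,a_2$ in the classical method, with $\lambda$ now absorbing the constraint. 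I would first try this with the simplest choice $A_n=C/(1-q^{b+n})$; if the residue cannot be killed, I will pass to the two-parameter form $(\alpha+\beta q^n)/(1-q^{b+n})$ and solve the resulting linear conditions.

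Once the $1/(1-q^{b+n})$ residue is gone, the remaining terms are polynomial in $q^n$ times $\frac{(q^{1-a};q)_{n}}{(q;q)_{n}}\lambda^n$, or shifts of it. The $q$-binomial theorem evaluates each of these as a multiple of $\frac{(\lambda q^{1-a};q)_\infty}{(\lambda;q)_\infty}$ or $\frac{(\lambda q^{-a};q)_\infty}{(\lambda;q)_\infty}$. These are related by the factors $(1-\lambda q^{-a})$ and $(1-\lambda)$, so everything can be expressed through $\left[\begin{matrix}\lambda q^{-a}\\ \lambda\end{matrix};q\right]_\infty$.

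The final step is routine simplification. I will combine the boundary term $-A_0B_1$ with these evaluations, substitute the explicit value of $\lambda$, and check that the coefficient collapses to $\frac{1-q^{b+1}}{1-q}$. Comparing the constant terms against the $q\to1$ limit, which is Gosper's identity \eqref{labelGosper}, gives a consistency check along the way.
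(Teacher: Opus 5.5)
Your strategy is the paper's: the modified Abel lemma with $B_n$ a $q$-binomial term and $A_n$ having denominator $1-q^{b+n}$, $\lambda$ forced by making $1-q^{b+n}$ divide the numerator of $1-B_{n+1}/B_n$, and the leftover summed by the $q$-binomial theorem. However, the concrete pieces you fix do not fit together. For every $A_n=(\alpha+\beta q^n)/(1-q^{b+n})$ one computes
\[
\nabla A_n=-\frac{(1-q)(\alpha q^b+\beta)\,q^{n-1}}{(1-q^{b+n})(1-q^{b+n-1})},
\]
so no choice of the constants removes the factor $q^{n-1}$. With your $B_n\propto\lambda^{n-1}$, the sum $\sum_n B_n\nabla A_n$ is therefore the ${}_2\phi_1$ at argument $\lambda q$, not $\lambda$. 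Likewise, the divisibility condition you single out as the key step comes out as $\lambda=(1-q^b)q^{a}/(1-q^{a+b})$, a factor $q$ away from the value you claim. The factor must be absorbed into $B_n$: take $B_n=\frac{(q^{1-a};q)_{n-1}}{(q;q)_{n-1}}(\lambda/q)^{n-1}$, as the paper does. Then $1-B_{n+1}/B_n=\frac{(1-q^a)(1-q^{b+n})}{(1-q^{a+b})(1-q^n)}$ holds precisely at Yamaguchi's $\lambda$.

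With this corrected $B_n$, your boundary argument (``$|\lambda|<1$ and $A_m$ bounded'') no longer works, because $B_{m+1}$ now has size $|\lambda/q|^m$. For your preferred first choice $A_n=C/(1-q^{b+n})$ the residue does cancel. But $A_mB_{m+1}\not\to0$ once $|\lambda|\ge|q|$, and the leftover series is $\sum_n\frac{(q^{-a};q)_n}{(q;q)_n}(\lambda/q)^n$. So this choice proves the identity only for $|\lambda|<|q|$; the same restriction appears if you keep $B_n\propto\lambda^{n-1}$ and rename $\lambda q$ as Yamaguchi's $\lambda$. What is needed is the decaying member $\alpha=0$ of your two-parameter family, i.e.\ the paper's $A_n=(1-q)q^n/(1-q^{b+n})$. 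Its $q^n$ cancels the $q^{-n}$ in $B_n$, giving $A_0B_1=\frac{1-q}{1-q^b}$, $A_mB_{m+1}=O(\lambda^m)\to0$, and $A_n\dotDelta B_n=-\frac{1-q}{1-q^b}\frac{(q^{-a};q)_n}{(q;q)_n}\lambda^n$. The $q$-binomial theorem at argument $\lambda$ itself then closes the proof on the full range $|\lambda|<1$, with no conversion between $(\lambda q^{1-a};q)_\infty$ and $(\lambda q^{-a};q)_\infty$ required. Without this choice you would need a separate analytic-continuation argument to pass from $|\lambda|<|q|$ to $|\lambda|<1$.
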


\begin{proof}
 In the modified Abel lemma, we set
$$
A_n = \frac{(1-q)q^n}{1-q^{b+n}}, \qquad 
B_n = \left[
\begin{matrix}
 q^{1-a} \\ 
 q 
\end{matrix}
;q 
\right]_{n-1} \left(\frac{\lambda}{q} \right)^{\,n-1}, 
$$
where $\lambda=(1-q^b)q^{a+1}/(1-q^{a+b})$ and $|\lambda|<1$. 
 Then we have 
 $$
\sum_{n=1}^{\infty} B_n \nabla A_n 
=
-\frac{(1-q)^2}{(1-q^b)(1-q^{b+1})}
\,{}_2\phi_1\!\left[
\begin{matrix}
q^{1-a}, q^b\\
q^{b+2}
\end{matrix}
;q,\lambda 
\right]. 
 $$
Since $|\lambda|<1$, we have $A_mB_{m+1}\to 0$ as $m\to\infty$.
Also, since
\begin{align*}
1-\frac{B_{n+1}}{B_n}
=
\frac{(1-q^a)(1-q^{b+n})}{(1-q^{a+b})(1-q^n)},
\end{align*}
we obtain 
 $$ 
\left(\lim_{m\to\infty}A_mB_{m+1}\right)-A_0B_1
 + \sum_{n=1}^{\infty}A_n \dotDelta B_n 
=
0-\frac{1-q}{1-q^b} 
-\frac{1-q}{1-q^b}
 \sum_{n = 1}^{\infty}
 \left[
\begin{matrix}
 q^{-a} \\ 
 q 
\end{matrix}
;q 
\right]_{n} \lambda^n .
$$
 The desired result follows by rewriting the infinite series as
\begin{align*}
\sum_{n=1}^{\infty}
 \left[
\begin{matrix}
 q^{-a} \\ 
 q 
\end{matrix}
;q 
\right]_{n} \lambda^n
 = 
 \left[
\begin{matrix}
 \lambda q^{-a} \\ 
 \lambda 
\end{matrix}
;q 
\right]_{\infty} - 1 
\end{align*}
 using the $q$-binomial theorem such that 
\begin{align*}
\sum_{n=0}^{\infty}
 \left[
\begin{matrix}
 \alpha \\ 
 q 
\end{matrix}
;q 
\right]_{n} x^n
 = 
 \left[
\begin{matrix}
 \alpha x \\ 
 x 
\end{matrix}
;q 
\right]_{\infty} 
\end{align*} 
 for $|x|<1$ and $ |q|<1$ (see, e.g., \cite[(1.3.2)]{GasperRahman2004}). 
\end{proof}

 The $q$-identity highlighted in the Theorem below
 gives us a new $q$-analogue of the Gosper's identity in \eqref{labelGosper}
 inequivalent to Yamaguchi's $q$-analogue of \eqref{labelGosper}. 
 Letting $q \to 1$ in the identity in the following Theorem, the left-hand side converges to 
\begin{equation*} 
 \frac{ a+b }{a^2 b (b+1)} \, 
 {}_2F_1\!\left[
\begin{matrix}
 {1-a}, b \\
 {b+2}
\end{matrix}
; \frac{b}{a+b} 
\right], 
\end{equation*}
 and the right-hand side converges to an expression that evaluates as 
 $$ \frac{\left(\dfrac{a}{a+b}\right)^a (a+b) }{a^2 b}, $$ 
 and we thus 
 obtain a $q$-analogue of Gosper's identity in \eqref{labelGosper}. 

\begin{Thm}\label{newGosper}
 Let $p(n) = \big(b q^{a+n}+a q^{a+n}-a q^a-b q^n \big) 
 \big( b q^{a+n+1}+a q^{a+n+1}-a q^a-b q^{n+1} \big)$. 
 Then, for $\left|\frac{b}{a+b}\right|<1$, the equality of 
 $$ (a+b) (1-q)^2 q^{2 a-1} \sum_{n=0}^{\infty} 
 \frac{\left(\frac{b q}{a+b}\right)^n}{p(n)} 
 \left[
\begin{matrix}
 q^{1 - a} \\
 q 
\end{matrix}
;q 
\right]_{n} $$
 and 
$$
 \frac{(a+b) 
 (1-q) q^{2 a-1}}{b \left(1-q^a\right) \left(a q^a-b \left(1-q^a\right)\right)} 
- \frac{1}{q \left(a+b-b q^{-a}\right)} 
 {}_2\phi_1\!\left[
\begin{matrix}
q^{1-a}, q \\
q^{2}
\end{matrix}
;q, \frac{b}{a+b}
\right] 
$$
 holds.
\end{Thm}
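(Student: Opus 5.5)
The plan is to follow the proof of Yamaguchi's theorem above and apply the modified Abel lemma \eqref{split2}, this time with a rational factor $r_2$ built from the linear factors of $p(n)$. Put $x=\frac{b}{a+b}$ and
$$f(n)=(a+b)q^{a+n}-aq^a-bq^n = q^n\big((a+b)q^a-b\big)-aq^a,$$
so that, from the definition in Theorem \ref{newGosper}, $p(n)=f(n)f(n+1)$. I would take
$$B_n=\left[\begin{matrix} q^{1-a}\\ q\end{matrix};q\right]_{n-1}x^{n-1},\qquad A_n=\frac{\kappa}{f(n)}$$
for a constant $\kappa$ to be fixed at the end. The index shift between $A$ and $B$ may need to be adjusted by one; I would pin it down by matching the first term of the left-hand sum.

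\textbf{Step 1 (the left-hand series).} Compute
$$\nabla A_n=\kappa\,\frac{f(n-1)-f(n)}{f(n)f(n-1)}=\kappa\,\frac{q^{n-1}(1-q)\big((a+b)q^a-b\big)}{p(n-1)}.$$
Then $\sum_{n\ge1}B_n\nabla A_n$, reindexed by $n\mapsto n+1$, becomes a constant multiple of $\sum_{n\ge0}\frac{(xq)^n}{p(n)}\left[\begin{matrix} q^{1-a}\\ q\end{matrix};q\right]_n$. I would choose $\kappa$ so that this constant is exactly $(a+b)(1-q)^2q^{2a-1}$, which reproduces the left-hand side of Theorem \ref{newGosper}.

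\textbf{Step 2 (the Abel side).} The ratio is $r_1(n)=B_{n+1}/B_n=x\,\frac{1-q^{n-a}}{1-q^n}$. Hence
$$1-r_1(n)=\frac{(1-x)-q^n(1-xq^{-a})}{1-q^n}=\frac{a-q^n(a+b-bq^{-a})}{(a+b)(1-q^n)}=-\frac{q^{-a}f(n)}{(a+b)(1-q^n)}.$$
This is precisely the mechanism of \eqref{rewritesummand}: the factor $f(n)$ in $1-r_1(n)$ cancels the denominator of $A_n$. Therefore
$$A_n\dotDelta B_n=-\frac{\kappa q^{-a}}{a+b}\,\frac{B_n}{1-q^n}.$$
Using $\frac{(q;q)_{n-1}}{1-q^n}=\frac{(q;q)_n}{1}\cdot\frac{1}{1-q^n}\cdot\frac{(q;q)_{n-1}}{(q;q)_{n}}$, i.e. $\frac{1}{(q;q)_{n-1}(1-q^n)}=\frac{(1-q)}{(q;q)_{n-1}(1-q^{n})}\cdot\frac{1}{1-q}$, one rewrites $\frac{1}{(q;q)_{n-1}(1-q^n)}=\frac{1}{1-q}\cdot\frac{(q;q)_{n-1}}{(q;q)_{n-1}(q^2;q)_{n-1}}$. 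So $\sum_{n\ge1}A_n\dotDelta B_n$ is a constant multiple of ${}_2\phi_1\!\left[\begin{matrix} q^{1-a},q\\ q^2\end{matrix};q,x\right]$. Its coefficient should simplify to $-\frac{1}{q(a+b-bq^{-a})}$.

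\textbf{Step 3 (boundary terms and convergence).} Since $|x|<1$ and $f(m)\to -aq^a$, we have $A_mB_{m+1}\to0$. The remaining boundary term is $-A_0B_1=-\kappa/f(0)$, with $f(0)=aq^a-b(1-q^a)$. Writing $\kappa$ out explicitly, this should give the rational term $\frac{(a+b)(1-q)q^{2a-1}}{b(1-q^a)(aq^a-b(1-q^a))}$. Convergence of both series follows from $|x|<1$, which justifies applying \eqref{split2}.

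The main obstacle is bookkeeping rather than anything conceptual. Steps 1 and 3 must be consistent: the same $\kappa$ and the same index offset have to produce both the prefactor $(a+b)(1-q)^2q^{2a-1}$ and the stated constants. In particular the factor $b(1-q^a)$ in the rational term suggests that $\kappa$ carries $\big((a+b)q^a-b\big)^{-1}$ together with further normalizing factors. I would first fix the offset by comparing the $n=0$ term on each side, and then solve for $\kappa$ and check the two remaining constants symbolically.
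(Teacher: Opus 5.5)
Your proposal is correct and follows essentially the paper's argument: the paper also applies the modified Abel lemma with $A_n$ the reciprocal of a linear function of $q^n$, chosen so that it cancels the numerator of $1-r_1(n)$. The paper finds that $A_n$ via undetermined $a_1,a_2$ and uses the unshifted $B_n=x^n(q^{1-a};q)_n/(q;q)_n$, which gives only an ``equivalent version''; your shifted $B_n$ with $A_n=\kappa/f(n)$ lands directly on the sum from $n=0$, so no index adjustment is needed.

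There is one arithmetic slip in Step 3: $f(0)=bq^a-b=-b(1-q^a)$, not $aq^a-b(1-q^a)$, which is $(a+b)q^a-b$. Step 1 forces $\kappa=(a+b)(1-q)q^{2a-1}/\big((a+b)q^a-b\big)$. With the corrected $f(0)$, $-A_0B_1=\kappa/\big(b(1-q^a)\big)$ is exactly the stated rational term, so $\kappa$ needs no further normalizing factors.
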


\begin{proof}
 We set 
 $$ A_n = \frac{1}{a_1 \frac{1-q^n}{1-q} + a_2} $$ 
 for undetermined coefficients $a_1$ and $a_2$, and we set 
 $$ B_n = 
 x^n \left[
\begin{matrix}
 \beta \\
 q 
\end{matrix}
;q 
\right]_{n}$$ for a parameter $\beta$, where $|x|<1$. 
 Using the modified Abel lemma, and using the relation in 
 \eqref{rewritesummand} for $C_{n} = 1$, 
 and applying partial fraction decomposition to 
 \eqref{rewritesummand}, we find that 
 $\sum_{n = 1}^{\infty} B_{n} \nabla A_{n}$ may be rewritten as 
$$
 -A_0 B_1 + \sum_{n=1}^{\infty} B_{n} \Bigg( 
 \frac{\beta x-q x}{\left(q^{n+1}-1\right) (a_1-a_2 q)} \\
+ \frac{v(q)}{(a_2
 q-a_1) \left(a_1 q^n-a_1+a_2 q-a_2\right)} \Bigg), 
$$
 for the value $v = v(q) = -a_1 q+a_1 \beta x-a_1 x+a_1+a_2 q^2-a_2 \beta q x-a_2 q+a_2 \beta x$. 
 By then enforcing the substitution 
 $$ a_2 = \frac{ a_1 (-q+\beta x-x+1)}{(1-q) (q-\beta x)}, $$ 
 we obtain the relation 
$$
 \sum_{n = 1}^{\infty} B_{n} \nabla A_{n} 
 \\= - A_{0} B_{1} 
 + \sum_{n=1}^{\infty} 
 B_n \frac{\beta x-q x}{\left(q^{n+1}-1\right) (a_1 - a_2 q)}. 
$$
 Setting $\beta = q^{1-a}$ and 
 $x = \frac{b}{a+b}$, we obtain an equivalent version of the desired result. 
\end{proof} 

\subsection{A variant of the $q$-analogue of Gauss' identity}\label{secGauss}
We recall the $q$-analogue of Gauss' identity due to Heine \cite{Heine1847}:
\begin{equation}\label{labelHeine}
{}_2\phi_1\!\left[
\begin{matrix}
a, b \\
c
\end{matrix}
;q, \frac{c}{ab}
\right]
=
\left[
\begin{matrix}
c/a,\, c/b \\
c,\, c/(ab)
\end{matrix}
;q
\right]_{\infty},
\qquad \left|\frac{c}{ab}\right| < 1.
\end{equation}
The following theorem presents a variant of \eqref{labelHeine}. 
By replacing $a, b, c$ with $q^a, q^b, q^c$, 
it yields a $q$-analogue of the identity in \cite[Theorem~2.1]{CampbellCantarini2022}.

\begin{Thm}
For $|c q^2/(ab)| < 1$, we have 
$$
{}_3\phi_2\!\left[
\begin{matrix}
a, b, d \\
c, d q^2
\end{matrix}
;q, \frac{cq^2}{a b}
\right] \\
= - \frac{(ab-cq) (1-d) (1-dq) q}{(1-q) (q-a) (q-b) c} 
 \left[
\begin{matrix}
 cq/a,\, cq/b \\
 c,\, cq^2/(ab) 
\end{matrix}
;q 
\right]_{\infty}, 
$$
where $d = (abq+abc-acq-bcq)/(ab-cq)q$. 
\end{Thm}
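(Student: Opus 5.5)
The plan is to apply the modified Abel lemma \eqref{split2}, as in the proof of Theorem~\ref{newGosper}. The factor $(d;q)_n/(dq^2;q)_n=(1-d)(1-dq)/\bigl((1-dq^n)(1-dq^{n+1})\bigr)$ in the ${}_3\phi_2$-summand is, up to a power of $q$, a backward difference. Indeed
$$
\frac{1}{1-dq^{n}}-\frac{1}{1-dq^{n-1}}=\frac{-d(1-q)q^{n-1}}{(1-dq^{n-1})(1-dq^{n})}.
$$
So I would set $A_n = 1/(1-dq^{n})$, or a shift of it, with $d$ left undetermined at first. I would take $B_n$ to be the rest of the summand, namely
$$
B_n=\left[\begin{matrix} a,\,b\\ q,\,c\end{matrix};q\right]_{n-1}\Bigl(\frac{cq^2}{ab}\Bigr)^{n-1}
$$
times a compensating power $q^{-(n-1)}$. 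This $q$-power is chosen so that $\sum_{n\ge1}B_n\nabla A_n$ is a constant multiple of the given ${}_3\phi_2$-series. With $z=cq/(ab)$, that constant is $-d(1-q)/\bigl((1-d)(1-dq)\bigr)$ after reindexing. Since $|cq^2/(ab)|<1$, $B_{m+1}\to0$ while $A_m\to1$, so the boundary limit vanishes and only $-A_0B_1=-1/(1-d)$ survives.

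Next I would compute $\dotDelta B_n=B_n(1-r_1(n))$ with
$$
r_1(n)=\frac{(1-aq^{n-1})(1-bq^{n-1})\,z}{(1-q^{n})(1-cq^{n-1})}.
$$
Putting $u=q^n$, the quantity $1-r_1(n)$ has numerator quadratic in $u$ and denominator $(1-u)(1-cu/q)$. The key step, and the analogue of choosing $a_2$ in Theorem~\ref{newGosper}, is to pick $d$ so that this quadratic numerator contains the factor $(1-du)$, or $(1-du/q)$ depending on the shift. That factor then cancels the denominator of $A_n$, and $A_n\dotDelta B_n$ becomes $B_n$ times a term linear in $u$ over $(1-u)(1-cu/q)$. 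Matching the constant and $u^2$ coefficients should force $d=(abq+abc-acq-bcq)/\bigl((ab-cq)q\bigr)$. This is consistent with $1-d=-(a-q)(b-q)c/\bigl((ab-cq)q\bigr)$, whose factors appear in the stated prefactor. Pinning down the correct $q$-power in $B_n$ and the correct index shift in $A_n$ so that this factorization actually occurs is the step I expect to be the main obstacle. I would first solve for $d$ symbolically by requiring the numerator to vanish at $u=1/d$, and then adjust the shifts until the formula for $d$ matches the statement.

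After the cancellation, the remaining factor is $\kappa\,(1-\mu u)$ for some constants $\kappa$ and $\mu$, and it should combine with $B_n$ into a multiple of the term of index $n$ in
$$
{}_2\phi_1\!\left[\begin{matrix} a/q,\, b/q\\ c\end{matrix};q,\frac{cq^2}{ab}\right].
$$
The mechanism is that multiplying by $(1-aq^{-1})(1-bq^{-1})$ converts $(a;q)_{n-1}(b;q)_{n-1}$ into $(a/q;q)_n(b/q;q)_n$, while the factor over $(1-u)(1-cu/q)$ upgrades $(q;q)_{n-1}(c;q)_{n-1}$ to $(q;q)_n(c;q)_n$ up to a $(1-c/q)$ adjustment. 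If instead a two-term linear combination appears, I would reduce it to one such ${}_2\phi_1$ using the contiguity $1-\mu u$. The $n=0$ term of that ${}_2\phi_1$ is $1$, and it should absorb the boundary term $-A_0B_1$. I would then evaluate the ${}_2\phi_1$ by Heine's formula \eqref{labelHeine} with $a,b$ replaced by $a/q,b/q$, which gives $\bigl[cq/a,\,cq/b;\,c,\,cq^2/(ab);q\bigr]_\infty$. Finally, dividing by the constant $-d(1-q)/\bigl((1-d)(1-dq)\bigr)$ and simplifying with the explicit $d$ should produce the stated prefactor $-(ab-cq)(1-d)(1-dq)q/\bigl((1-q)(q-a)(q-b)c\bigr)$.
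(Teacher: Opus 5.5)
There is a genuine gap: your framework matches the paper's (the same $B_n$, the modified Abel lemma, then Heine), but the choice $A_n=1/(1-dq^{n})$ makes the last step fail. Neither an index shift in $A_n$ (it only renames $d$) nor a different $q$-power in $B_n$ can repair it. Since $\nabla A_n=-d(1-q)q^{n-1}/\bigl((1-dq^{n-1})(1-dq^{n})\bigr)$ already supplies $q^{n-1}$, reproducing the argument $cq^2/(ab)$ in the ${}_3\phi_2$ forces $B_n=\frac{(a;q)_{n-1}(b;q)_{n-1}}{(q;q)_{n-1}(c;q)_{n-1}}\bigl(\frac{cq}{ab}\bigr)^{n-1}$. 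With this $B_n$, the $u^2$-terms in the numerator of $1-r_1(n)$ cancel identically, and
\[
1-r_1(n)=\frac{(ab-cq)(1-dq^{n})}{ab\,(1-q^{n})(1-cq^{n-1})}
\]
holds with exactly the stated $d$. So the factorization you feared is automatic, and there is no leftover factor $\kappa(1-\mu u)$. The real problem is that $A_n\dotDelta B_n$ then carries only $(cq/(ab))^{n-1}$:
\[
\sum_{n\ge1}A_n\dotDelta B_n=\frac{(ab-cq)q}{c(q-a)(q-b)}\left({}_2\phi_1\!\left[\begin{matrix}a/q,\,b/q\\ c\end{matrix};q,\frac{cq}{ab}\right]-1\right).
\]
Here $cq/(ab)$ is $q^{-1}$ times the Heine value $c/\bigl((a/q)(b/q)\bigr)=cq^2/(ab)$, so \eqref{labelHeine} does not apply. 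If you applied it anyway, your final constant would be off by a factor of $d$. The boundary step has the same defect: $A_m\to1$ while $B_{m+1}$ behaves like a constant times $(cq/(ab))^m$, and this need not tend to $0$ when only $|cq^2/(ab)|<1$ is assumed.

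The missing idea is to put a factor $q^{n-1}$ into $A_n$. The paper takes $A_n=(1-q)q^{n-1}/(1-dq^{n})$ (its displayed $A_n$ after cancelling $ab-cq$) together with your $B_n$. Then $\nabla A_n=-(1-q)^2q^{n-2}/\bigl((1-dq^{n-1})(1-dq^{n})\bigr)$ still gives $\sum_{n\ge1}B_n\nabla A_n=-\frac{(1-q)^2}{q(1-d)(1-dq)}\,{}_3\phi_2$, and $A_mB_{m+1}$ now decays like $(cq^2/(ab))^m$. The extra $q^{n-1}$ also turns $\sum_{n\ge1}A_n\dotDelta B_n$ into $\frac{(1-q)(ab-cq)}{c(q-a)(q-b)}\bigl({}_2\phi_1(a/q,b/q;c;q,cq^2/(ab))-1\bigr)$. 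By Heine this equals $\frac{(1-q)(ab-cq)}{c(q-a)(q-b)}(P-1)$ with $P=\frac{(cq/a;q)_\infty(cq/b;q)_\infty}{(c;q)_\infty(cq^2/(ab);q)_\infty}$. Since $1-d=-c(q-a)(q-b)/\bigl((ab-cq)q\bigr)$, the term $-A_0B_1=-\frac{1-q}{q(1-d)}=\frac{(1-q)(ab-cq)}{c(q-a)(q-b)}$ cancels the $-1$ exactly, and the stated prefactor follows at once. Your version could be rescued only with an ingredient you did not plan: the contiguous relation $\frac{1-A}{A}\bigl(\phi(Aq)-\phi\bigr)=\frac{q-C}{C}\bigl(\phi(C/q)-\phi\bigr)$ for $\phi={}_2\phi_1(A,B;C;q,z)$, where $\phi(Aq)$ and $\phi(C/q)$ denote the series with $A\mapsto Aq$ and $C\mapsto C/q$ respectively. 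Used at $z=C/(ABq)$, where both shifted series are Heine-summable, it yields ${}_2\phi_1(a/q,b/q;c;q,cq/(ab))=dP$. Even so, this needs $|cq/(ab)|<1$ followed by analytic continuation.
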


\begin{proof}
In the modified Abel lemma, we set 
$$A_n = \frac{(1-q)(ab-cq)q^{n-1}}{ab-cq-(abq+abc-acq-bcq)q^{n-1}}$$
and 
$$B_n =
\left[
\begin{matrix}
a,\, b \\
q,\, c
\end{matrix}
;q
\right]_{n-1}
\left(\frac{cq}{ab}\right)^{n-1}, $$
where $|cq^2/(ab)| < 1$. 
Then, the desired result follows from \eqref{labelHeine}, with the details omitted. 
\end{proof}

\subsection{Variants of the $q$-analogue of Kummer's identity}
We recall the $q$-analogue of Kummer's identity obtained by Bailey \cite{Bailey1941} and independently by Daum \cite{Daum1942}:
\begin{equation}\label{labelBaileyDaum}
 {}_2\phi_1\!\left[
\begin{matrix}
a, b \\
aq/b
\end{matrix}
;q, -\frac{q}{b}
\right] =
\left[
\begin{matrix}
-q \\
-q/b,\, aq/b
\end{matrix}
;q
\right]_{\infty} 
\left[
\begin{matrix}
aq,\, aq^2/b^2 \\
\text{--}
\end{matrix}
;q^2
\right]_{\infty}, \qquad \left| \frac{q}{b} \right| < 1. 
\end{equation}
The following three theorems each present a different variant of \eqref{labelBaileyDaum}. 

\begin{Thm}\label{variant1}
For $|q^2/b| < 1$, we have 
\begin{align*}
&{}_6\phi_5\!\left[
\begin{matrix}
a, a^{\frac{1}{2}}q, -a^{\frac{1}{2}}q, a^{\frac{1}{2}}q^{-\frac{1}{2}}, -a^{\frac{1}{2}}q^{-\frac{1}{2}}, b \\
a^{\frac{1}{2}}, -a^{\frac{1}{2}}, a^{\frac{1}{2}}q^{\frac{3}{2}}, -a^{\frac{1}{2}}q^{\frac{3}{2}}, aq/b
\end{matrix}
;q, -\frac{q^2}{b}
\right] \\
&\qquad 
= (1-aq) \left[
\begin{matrix}
-q^2 \\
-q^2/b,\, aq/b
\end{matrix}
;q
\right]_{\infty} 
\left[
\begin{matrix}
aq^2,\, aq^3/b^2 \\
\text{--}
\end{matrix}
;q^2
\right]_{\infty}. 
\end{align*}
\end{Thm}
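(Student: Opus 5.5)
The plan is to reduce the ${}_6\phi_5$-series to the Bailey--Daum series in \eqref{labelBaileyDaum}, using the modified Abel lemma \eqref{split2} in the same way as in the proofs of the preceding theorems. First I simplify the summand. Pairing the numerator and denominator parameters with
\[
(a^{1/2}q;q)_n(-a^{1/2}q;q)_n=(aq^2;q^2)_n,\qquad (a^{1/2};q)_n(-a^{1/2};q)_n=(a;q^2)_n,
\]
and similarly for the parameters $\pm a^{1/2}q^{-1/2}$ and $\pm a^{1/2}q^{3/2}$, the $n$th term of the ${}_6\phi_5$-series becomes
\[
\left[\begin{matrix} a,\, b\\ q,\, aq/b\end{matrix};q\right]_n\Big(-\frac{q}{b}\Big)^n\cdot
\frac{(1-aq^{-1})(1-aq)}{1-a}\cdot\frac{q^n(1-aq^{2n})}{(1-aq^{2n-1})(1-aq^{2n+1})} .
\]
This is the Bailey--Daum summand multiplied by a rational function of $q^n$ which looks like a backward difference of $q^n/(1-aq^{2n-1})$-type terms.

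I next rewrite the right-hand side in terms of \eqref{labelBaileyDaum}. We have $(-q^2;q)_\infty/(-q^2/b;q)_\infty=\frac{1+q/b}{1+q}\,(-q;q)_\infty/(-q/b;q)_\infty$, and $(aq^2,aq^3/b^2;q^2)_\infty=(aq,aq^2/b^2;q^2)_\infty/\big((1-aq)(1-aq^2/b^2)\big)$. Hence the claimed right-hand side equals
\[
\frac{1+q/b}{(1+q)(1-aq^2/b^2)}
\]
times the Bailey--Daum value, so it suffices to show
\[
{}_6\phi_5=\frac{1+q/b}{(1+q)(1-aq^2/b^2)}\;{}_2\phi_1\!\left[\begin{matrix}a,b\\aq/b\end{matrix};q,-\frac qb\right]
\]
(possibly up to an explicit boundary constant that must cancel).

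For the Abel-lemma step I set
\[
B_n=\left[\begin{matrix} a,\, b\\ q,\, aq/b\end{matrix};q\right]_{n-1}\Big(-\frac{q^2}{b}\Big)^{n-1},
\]
in analogy with the proofs of \eqref{firstqGosper} and the Heine variant. For $A_n$ I take an undetermined form
\[
A_n=\frac{a_1+a_2q^{n}}{1-aq^{2n-1}},
\]
and fix $a_1,a_2$ by two requirements. The first is that $B_n\nabla A_n$ reproduce exactly the ${}_6\phi_5$-summand, including the factor $q^n(1-aq^{2n})/((1-aq^{2n-1})(1-aq^{2n+1}))$. The second is that, in $A_n\dotDelta B_n$, computed from
\[
1-\frac{B_{n+1}}{B_n}=\frac{(1-q^n)(1-aq^{n+1}/b)+(1-aq^{n})(1-bq^{n})q^2/b}{(1-q^n)(1-aq^{n+1}/b)},
\]
the denominator $1-aq^{2n-1}$ cancels against a factor of the numerator. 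This is exactly the partial-fraction elimination used in the proof of Theorem~\ref{newGosper}. After the cancellation, $A_n\dotDelta B_n$ should be a constant multiple of the Bailey--Daum summand with argument $-q/b$, since the leftover factor $q^{-n}$ converts $(-q^2/b)^n$ into $(-q/b)^n$. The limit term $A_mB_{m+1}$ vanishes because $|q^2/b|<1$. The term $-A_0B_1$ is an explicit constant which should be absorbed by the $n=0$ term of the Bailey--Daum series.

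I expect the main obstacle to be the choice of $A_n$: checking that a single choice of $(a_1,a_2)$ meets both requirements simultaneously. If the two-parameter form fails, I would first enlarge the ansatz to $A_n=(a_1+a_2q^n+a_3q^{2n})/(1-aq^{2n-1})$ and let the cancellation condition decide. A fallback is to choose $A_n$ so that the $\dotDelta$ side is the ${}_6\phi_5$-summand and the $\nabla$ side is Bailey--Daum. The remaining algebra, matching the constant to $\frac{1+q/b}{(1+q)(1-aq^2/b^2)}$ and handling the boundary term, is routine verification.
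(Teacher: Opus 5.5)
There is a genuine gap, in two places. First, your reduction of the right-hand side is incorrect. We have $(aq;q^2)_\infty/(1-aq)=(aq^3;q^2)_\infty$, not $(aq^2;q^2)_\infty$, and $(aq^2/b^2;q^2)_\infty/(1-aq^2/b^2)=(aq^4/b^2;q^2)_\infty$, not $(aq^3/b^2;q^2)_\infty$. The stated right-hand side divided by the Bailey--Daum value at $(a,b)$ therefore contains the non-rational factor $(aq^2;q^2)_\infty(aq^3/b^2;q^2)_\infty/\bigl((aq;q^2)_\infty(aq^2/b^2;q^2)_\infty\bigr)$. So the intermediate identity you set out to prove is false; it already disagrees with the theorem at first order in $a$, and no boundary constant can repair this. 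The instance of \eqref{labelBaileyDaum} that does match is the one with $(a,b)\mapsto(a/q,b/q)$. It has the same lower parameter $aq/b$ and argument $-q^2/b$, so it converges exactly when $|q^2/b|<1$. Because $(a;q^2)_\infty=(1-a)(aq^2;q^2)_\infty$, its value is $\frac{(1+q)(1-a)}{1-aq}$ times the stated right-hand side.

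Second, your Abel-lemma ansatz cannot succeed. With the whole factor $(-q^2/b)^{n-1}$ in $B_n$, matching $B_n\nabla A_n$ to the ${}_6\phi_5$-summand forces $\nabla A_n$ to be a constant multiple of $\frac{1-aq^{2n-2}}{(1-aq^{2n-1})(1-aq^{2n-3})}$. That quantity tends to a nonzero limit. By contrast, any $A_n$ that is a rational function of $q^n$ regular at $q^n=0$ has $\nabla A_n=O(q^n)$, and this includes your three-parameter fallback. Explicitly, your $A_n$ gives the numerator $(q-1)\bigl[a_2q^{n-1}(1+aq^{2n-2})+a_1a(1+q)q^{2n-3}\bigr]$, with the wrong sign in $1+aq^{2n-2}$.

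The cancellation also fails. The correct ratio is $B_{n+1}/B_n=-\frac{q^2}{b}\frac{(1-aq^{n-1})(1-bq^{n-1})}{(1-q^n)(1-aq^n/b)}$; your display has shifted indices. With it, the numerator of $1-B_{n+1}/B_n$ is $(1+q^2/b)-(1+q)(1+a/b)q^n+a(1+1/b)q^{2n}$. For generic $a,b$ this vanishes at neither root $q^n=\pm\sqrt{q/a}$ of $1-aq^{2n-1}$.

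The missing idea is to split the geometric factor and put the sign into $A_n$, as the paper does. Take $B_n=\frac{(a;q)_{n-1}(b;q)_{n-1}}{(q;q)_{n-1}(aq/b;q)_{n-1}}(q/b)^{n-1}$ and $A_n=\frac{(1-q)(-q)^n}{1-aq^{2n-1}}$. Then
\[
\nabla A_n=-(1-q^2)(-q)^{n-1}\frac{1-aq^{2n-2}}{(1-aq^{2n-1})(1-aq^{2n-3})},\qquad 1-\frac{B_{n+1}}{B_n}=\frac{(1-q/b)(1-aq^{2n-1})}{(1-q^n)(1-aq^n/b)}.
\]
This gives $\sum_{n\ge1}B_n\nabla A_n=-\frac{(1-q^2)(1-a)}{(1-a/q)(1-aq)}\,{}_6\phi_5$. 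On the other side, $A_n\dotDelta B_n$ is $-\frac{1-q}{1-a/q}$ times the $n$th term of the Bailey--Daum series with parameters $a/q,b/q$. The mismatch between $(a;q)_{n-1}(b;q)_{n-1}$ and $(q;q)_n(aq/b;q)_n$ is exactly what produces this shift. The term $-A_0B_1=-\frac{1-q}{1-a/q}$ supplies the $n=0$ term, and the limit term vanishes since $|q^2/b|<1$. Hence ${}_6\phi_5=\frac{1-aq}{(1+q)(1-a)}\,{}_2\phi_1\!\left[\begin{matrix}a/q,\,b/q\\ aq/b\end{matrix};q,-\frac{q^2}{b}\right]$, and the evaluation in the first paragraph finishes the proof.
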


\begin{proof}
In the modified Abel lemma, we set 
$$A_n = \frac{(1-q) (-q)^{n}}{1-aq^{2n-1}}$$
and 
$$B_n =
\left[
\begin{matrix}
a,\, b \\
q,\, aq/b 
\end{matrix}
;q
\right]_{n-1} \left(\frac{q}{b}\right)^{n-1}, $$
where $|q^2/b| < 1$. 
Then, the desired result follows from \eqref{labelBaileyDaum}, with the details omitted. 
\end{proof}

\begin{Thm}\label{variant2}
For $|q^2/b| < 1$, we have 
\begin{align*}
&{}_4\phi_3\!\left[
\begin{matrix}
a, b, \frac{a+b}{q+b}, \frac{(a+b)q}{1+b} \\
aq/b, \frac{(a+b)q^2}{q+b}, \frac{a+b}{1+b}
\end{matrix}
;q, -\frac{q^2}{b}
\right]  = \frac{q-aq+b-bq}{b} \left[
\begin{matrix}
-q \\
-q/b,\, aq/b
\end{matrix}
;q
\right]_{\infty} 
\left[
\begin{matrix}
aq^2,\, aq^3/b^2 \\
\text{--}
\end{matrix}
;q^2
\right]_{\infty}. 
\end{align*}
\end{Thm}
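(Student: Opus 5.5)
The plan is to run the modified Abel lemma \eqref{split2} exactly as in the proof of Theorem~\ref{variant1}, keeping the same $B_n$ and changing only the rational factor in $A_n$. So I would set
$$
B_n=\left[\begin{matrix} a,\, b\\ q,\, aq/b\end{matrix};q\right]_{n-1}\left(\frac{q}{b}\right)^{n-1},
\qquad
A_n=\frac{\kappa\,(-q)^{n}}{1-e\,q^{n-1}},
$$
where $e=(a+b)/(q+b)$ and $\kappa$ is a normalising constant (probably $\kappa=1-q$, possibly with an extra factor of $1-e$) that I will fix at the end. The reason for this choice is visible in the statement. The pair $e$ over $eq^2$ in the ${}_4\phi_3$ contributes
$$
\frac{(e;q)_n}{(eq^2;q)_n}=\frac{(1-e)(1-eq)}{(1-eq^n)(1-eq^{n+1})},
$$
and the pair $fq$ over $f$, with $f=(a+b)/(1+b)$, contributes $(1-fq^{n})/(1-f)$. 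So the summand of the ${}_4\phi_3$ is a Bailey--Daum summand multiplied by a ratio (linear in $q^n$)/(product of two consecutive factors). That is precisely the shape of $\nabla A_n$ for the $A_n$ above.

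The first step is to compute $\nabla A_n=A_n-A_{n-1}$. Over the common denominator $(1-eq^{n-1})(1-eq^{n-2})$ its numerator is linear in $q^n$. After the index shift $n\mapsto n+1$ (forced by the $(\cdot)_{n-1}$ in $B_n$), this numerator should be a constant multiple of $1-fq^{n}$. Checking that the identity $e(1+b)=fq\cdot\frac{q+b}{q}\cdot\frac{1+b}{q+b}$, or whatever form it takes, really produces the stated $f$ is the first concrete computation. It yields $\sum_{n\ge1}B_n\nabla A_n=\text{const}\cdot{}_4\phi_3[\cdots;q,-q^2/b]$, where the extra factor $-q$ in the argument comes from $(-q)^n$ combined with $(q/b)^{n-1}$.

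The second step is the right-hand side of \eqref{split2}. The boundary limit $A_mB_{m+1}$ vanishes because $|q^2/b|<1$. The term $A_0B_1$ is an explicit constant. For the remaining sum I write
$$
A_n\dotDelta B_n=A_nB_n\bigl(1-r_1(n)\bigr),\qquad
r_1(n)=\frac{(1-aq^{n-1})(1-bq^{n-1})\,q}{(1-q^n)(1-aq^n/b)\,b},
$$
and decompose $(1-r_1(n))/(1-eq^{n-1})$ into partial fractions. The key point of the Campbell--Cantarini method is that the value $e=(a+b)/(q+b)$ should be exactly the root that makes the numerator of $1-r_1(n)$ vanish at $q^{n-1}=1/e$. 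That removes the pole, so $A_n\dotDelta B_n$ becomes a constant multiple of $(-q)^n\left[\begin{smallmatrix} a,\,b\\ q,\,aq/b\end{smallmatrix};q\right]_{n}(q/b)^{n}$, up to index bookkeeping. This verification is the main obstacle. I would do it by clearing denominators and checking that the numerator, as a polynomial in $z=q^{n-1}$, has $z=(q+b)/(a+b)$ as a root. If that is not exactly right, I would instead solve for $e$ from this vanishing condition and reconcile it with the parameters in the statement.

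The final step is to recognise the resulting sum as the Bailey--Daum series \eqref{labelBaileyDaum} with its $n=0$ term removed, and evaluate it by \eqref{labelBaileyDaum}. The leftover constants, namely $A_0B_1$, the missing $n=0$ term, and the prefactors $(1-e)(1-eq)$ and $1/(1-f)$, should then combine into the single factor $(q-aq+b-bq)/b$. The appearance of $-q$ and $aq^2$ in place of $-q^2$ and $aq$ in the infinite products I expect to come from $1-aq$ and $1+q/b$ factors cancelling against these constants.
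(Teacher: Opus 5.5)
Your setup breaks down at exactly the step you flag as the main obstacle, and the fallback of re-solving for $e$ cannot repair it. With your $B_n$ (the one from Theorem~\ref{variant1}, carrying $(q/b)^{n-1}$) one computes
\[
1-\frac{B_{n+1}}{B_n}=\frac{(b-q)\,(1-aq^{2n-1})}{b\,(1-q^n)\,(1-aq^n/b)}.
\]
The numerator vanishes only at $q^n=\pm\sqrt{q/a}$, which does not depend on $b$. That is why this $B_n$ pairs with the denominator $1-aq^{2n-1}$ of Theorem~\ref{variant1}. So the pole of your $A_n$ at $q^{n-1}=(q+b)/(a+b)$ never cancels, no choice of $e$ tied to $(a+b)/(q+b)$ can make it cancel, and $\sum A_n\dotDelta B_n$ is not a Bailey--Daum series.

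The $\nabla A_n$ side also fails to give the stated ${}_4\phi_3$. With geometric ratio $-q$, the numerator of $\nabla A_n$ is $-(1+q)+2eq^{n-1}$. This produces the parameter pair $\frac{2eq}{1+q}$ over $\frac{2e}{1+q}$ instead of $\frac{(a+b)q}{1+b}$ over $\frac{a+b}{1+b}$. Placing the pole at $q^{n-1}$ produces the pair $e/q$ over $eq$ instead of $e$ over $eq^2$. Finally, the series you expect to end with, $\sum_n(-q^2/b)^n\frac{(a;q)_n(b;q)_n}{(q;q)_n(aq/b;q)_n}$, has argument $-q^2/b$, whereas \eqref{labelBaileyDaum} with parameters $a,b$ requires $-q/b$.

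The missing idea, which is the paper's choice, is to double the geometric factor in $B_n$ and put the ratio $-b$ (not $-q$) into $A_n$:
\[
B_n=\frac{(a;q)_{n-1}(b;q)_{n-1}}{(q;q)_{n-1}(aq/b;q)_{n-1}}\left(\frac{q}{b}\right)^{2(n-1)},\qquad
A_n=\frac{(1-q)(-b)^n}{q+b-(a+b)q^n}.
\]
Then $1-\frac{B_{n+1}}{B_n}=\frac{(b-q)\,(q+b-(a+b)q^n)}{b^2(1-q^n)(1-aq^n/b)}$, so the pole cancels exactly. On the other side, $\nabla A_n=-\frac{(1-q)(1+b)(-b)^{n-1}(1-fq^{n-1})}{(q+b)(1-eq^{n-1})(1-eq^{n})}$ with $e=\frac{a+b}{q+b}$ and $f=\frac{a+b}{1+b}$. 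This gives precisely the stated ${}_4\phi_3$, with argument $(-b)(q/b)^2=-q^2/b$.

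The surviving series is a constant multiple of $\sum_{n\ge1}\frac{(a;q)_{n-1}(b;q)_{n-1}}{(q;q)_n(aq/b;q)_n}(-q^2/b)^n$. This is the Bailey--Daum series with $(a,b)\mapsto(a/q,b/q)$, whose argument is indeed $-q/(b/q)=-q^2/b$, with its $n=0$ term removed. The boundary term $A_0B_1=(1-q)/(q-a)$ supplies exactly that missing term. The identities $(-q/b;q)_\infty=(1+q/b)(-q^2/b;q)_\infty$ and $(a;q^2)_\infty=(1-a)(aq^2;q^2)_\infty$ then convert the product into the stated one with prefactor $(q-aq+b-bq)/b$. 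Note that the factor that cancels is $1-a$, not $1-aq$.
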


\begin{proof}
In the modified Abel lemma, we set 
$$A_n = \frac{(1-q) (-b)^{n}}{q+b-(a+b)q^n}$$
and 
$$B_n =
\left[
\begin{matrix}
a,\, b \\
q,\, aq/b 
\end{matrix}
;q
\right]_{n-1} \left(\frac{q}{b}\right)^{2(n-1)}, $$
where $|q^2/b| < 1$. 
Then, the desired result follows from \eqref{labelBaileyDaum}, with the details omitted. 
\end{proof}

\begin{Thm}\label{variant3}
For $|q/b| < 1$, we have 
\begin{align*}
&{}_4\phi_3\!\left[
\begin{matrix}
a, b, \frac{(q+b)a}{(a+b)q}, \frac{(1+b)aq}{a+b} \\
aq/b, \frac{(q+b)aq}{a+b}, \frac{(1+b)a}{a+b}
\end{matrix}
;q, -\frac{q}{b}
\right]  
= 
\frac{a+b-aq-ab}{b} \left[
\begin{matrix}
-q \\
-q/b,\, aq/b
\end{matrix}
;q
\right]_{\infty} 
\left[
\begin{matrix}
aq^2,\, aq^3/b^2 \\
\text{--}
\end{matrix}
;q^2
\right]_{\infty}. 
\end{align*}
\end{Thm}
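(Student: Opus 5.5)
The plan is to follow the same pattern as the proofs of Theorems \ref{variant1} and \ref{variant2}: apply the modified Abel lemma \eqref{split2} with $B_n$ built from the Bailey--Daum summand. Write
$$e=\frac{(q+b)a}{(a+b)q},\qquad f=\frac{(1+b)a}{a+b}.$$
The extra parameter pairs in the ${}_4\phi_3$-series contribute a term-wise factor
$$\frac{(e;q)_m (fq;q)_m}{(eq;q)_m (f;q)_m}=\frac{(1-e)(1-fq^m)}{(1-f)(1-eq^m)}.$$
So the left-hand side is a Bailey--Daum type series, with argument $-q/b$, weighted by the rational factor $(1-fq^m)/(1-eq^m)$. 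Accordingly, I would set
$$B_n=\left[\begin{matrix} a,\, b\\ q,\, aq/b\end{matrix};q\right]_{n-1}\left(\frac{q}{b}\right)^{n-1},\qquad A_n=\frac{(1-q)(-1)^n}{1-e\,q^{n-1}},$$
up to a constant normalisation. The hypothesis $|q/b|<1$ then makes $A_mB_{m+1}\to0$, so the limit term in \eqref{split2} vanishes and only $-A_0B_1$ survives as a boundary term. The sign $(-1)^n$ in $A_n$ combines with $(q/b)^{n-1}$ in $B_n$ to produce the argument $-q/b$.

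The core computation is the forward-difference side, using \eqref{rewritesummand}. We have $A_n\dotDelta B_n=A_nB_n(1-r_1(n))$, where
$$r_1(n)=\frac{B_{n+1}}{B_n}=\frac{(1-aq^{n-1})(1-bq^{n-1})}{(1-q^n)(1-aq^n/b)}\cdot\frac{q}{b}.$$
I would expand $A_n(1-r_1(n))$ by partial fractions in $q^n$. The $(1-q^n)(1-aq^n/b)$ part is absorbed by shifting $B_n$ to $B_{n+1}$. What remains is a term with the single pole $1/(1-eq^{n-1})$ plus a polynomial part. The particular value of $e$ is what makes the combination collapse, in the same way that $a_1,a_2$ were fixed in the proof of Theorem \ref{newGosper}. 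One of the two pieces should then assemble, after reindexing $m=n-1$, into exactly the factor $(1-fq^m)/(1-eq^m)$ of the target ${}_4\phi_3$-series. The other piece should be a pure Bailey--Daum ${}_2\phi_1$ with argument $-q/b$.

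Concretely, I would check two algebraic facts in the variable $z=q^{n-1}$. First, the numerator of $1-r_1$ (a quadratic in $z$), combined with the $(-1)$ from the sign change, factors so that one root is $1/f$ up to the appropriate shift. Second, the residue at $z=1/e$ produces a multiple of the undecorated Bailey--Daum summand. If the roles of the two sides turn out reversed, I would instead take $A_n$ with denominator $1-fq^{n-1}$ and relocate the ${}_4\phi_3$-series to the $\nabla$ side.

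The backward-difference side $\sum B_n\nabla A_n$ is then handled with the same partial fractions. Finally, I would evaluate every remaining ${}_2\phi_1$-series by \eqref{labelBaileyDaum}. The shift identities $(-q;q)_\infty$ versus $(-q^2;q)_\infty$ and $(aq;q^2)_\infty=(1-aq)(aq^3;q^2)_\infty$ should bring everything to the stated product. The constant $(a+b-aq-ab)/b$ is expected to come from the normalising factors $(1-e)/(1-f)$ together with the boundary term $A_0B_1$.

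The main obstacle I expect is the partial-fraction bookkeeping: confirming that with exactly these $e,f$ the non-telescoping terms cancel and no stray shifted Bailey--Daum series is left over. The second factor in $(aq^2,aq^3/b^2;q^2)_\infty$ on the right, versus $(aq,aq^2/b^2;q^2)_\infty$ in \eqref{labelBaileyDaum}, hints that a second, $b\mapsto bq^{-1}$ or $a\mapsto aq$ shifted instance of \eqref{labelBaileyDaum} may be needed. If so, I would evaluate that instance separately and combine the two products at the end.
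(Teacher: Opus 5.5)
There is a genuine gap: with the split you chose, the cancellation you rely on does not exist. Putting $(q/b)^{n-1}$ into $B_n$ gives
\[
1-\frac{B_{n+1}}{B_n}=\frac{(b-q)\,(1-aq^{2n-1})}{b\,(1-q^n)(1-aq^n/b)}.
\]
Its numerator is the well-poised factor used in Theorem~\ref{variant1}, and it does not involve $e$ at all. So $\sum A_n\dotDelta B_n$ keeps an uncancelled factor $(1-aq^{2n-1})/(1-eq^{n-1})$ and is not a Bailey--Daum series. Your $\nabla A_n$ has numerator $2-e(1+q)q^{n-2}$ and poles $1-eq^{n-1}$, $1-eq^{n-2}$, which is not the summand of the stated series either.

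You have also misread that series. The lower parameter $\frac{(q+b)aq}{a+b}$ is $eq^2$, not $eq$, so the extra factor is
\[
\frac{(e;q)_m(fq;q)_m}{(eq^2;q)_m(f;q)_m}=\frac{(1-e)(1-eq)(1-fq^m)}{(1-f)(1-eq^m)(1-eq^{m+1})}.
\]
Two consecutive poles are what $\nabla$ produces from an $A_n$ with a single linear pole. So the ${}_4\phi_3$ must come from $\sum B_n\nabla A_n$, not from a partial-fraction piece of $A_n(1-r_1(n))$.

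The missing idea is to put the whole geometric factor into $A_n$ and leave $B_n=t_{n-1}$ bare, where $t_m=\frac{(a;q)_m(b;q)_m}{(q;q)_m(aq/b;q)_m}$. The paper takes $A_n=\frac{(1-q)(-q/b)^{n-1}}{a+b-(q+b)aq^{n-1}}$, whose pole is $1-eq^n$. Now
\[
1-\frac{t_n}{t_{n-1}}=\frac{(b-q)\,q^{n-1}\bigl(a+b-(q+b)aq^{n-1}\bigr)}{b\,(1-q^n)(1-aq^n/b)}.
\]
The linear factor cancels the denominator of $A_n$; this is what determines $e$. The leftover $q^{n-1}$ raises the argument to $-q^2/b$, giving
\[
\sum_{n\ge1}A_n\dotDelta B_n=\frac{1-q}{q-a}\left({}_2\phi_1\!\left[\begin{matrix}a/q,\ b/q\\ aq/b\end{matrix};q,-\frac{q^2}{b}\right]-1\right).
\]
The boundary term $-A_0B_1=\frac{1-q}{q-a}$ cancels the $-1$. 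On the other side,
\[
\nabla A_n=-\frac{(1-q)(q+b)}{b(a+b)}\left(-\frac{q}{b}\right)^{n-2}\frac{1-fq^{n-1}}{(1-eq^{n-1})(1-eq^{n})},
\]
which yields $\frac{(1-q)(q+b)(1-a)}{(q-a)(a+b-aq-ab)}$ times the ${}_4\phi_3$.

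So exactly one instance of \eqref{labelBaileyDaum} is needed, at $(a,b)\mapsto(a/q,b/q)$. It equals $(1-a)(1+q/b)$ times the stated product, and these factors combine with the coefficients above to give $\frac{a+b-aq-ab}{b}$. Your fallback of evaluating two differently shifted Bailey--Daum sums is unnecessary here. It would also not in general produce a single infinite product.
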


\begin{proof}
In the modified Abel lemma, we set 
$$A_n = \frac{1-q}{a+b-(q+b)aq^{n-1}} \left(-\frac{q}{b}\right)^{n-1}$$
and 
$$B_n =
\left[
\begin{matrix}
a,\, b \\
q,\, aq/b 
\end{matrix}
;q
\right]_{n-1}, $$
where $|q/b| < 1$. 
Then, the desired result follows from \eqref{labelBaileyDaum}, with the details omitted. 
\end{proof}

The identity in Theorem~\ref{variant1} is a special case of \cite[(2.7.1)]{GasperRahman2004} with $c = a^{\frac{1}{2}} q^{-\frac{1}{2}}$ and $d = -a^{\frac{1}{2}} q^{-\frac{1}{2}}$, namely, 
$$
{}_6\phi_5\!\left[
\begin{matrix}
a, a^{\frac{1}{2}}q, -a^{\frac{1}{2}}q, b, c, d \\
a^{\frac{1}{2}}, -a^{\frac{1}{2}}, aq/b, aq/c, aq/d\
\end{matrix}
;q, \frac{aq}{bcd}
\right] 
= 
\left[
\begin{matrix}
aq,\, aq/(bc),\, aq/(bd),\, aq/(cd) \\
aq/b,\, aq/c,\, aq/d,\, aq/(bcd)
\end{matrix}
;q
\right]_{\infty}, 
$$
where $|aq/(bcd)|<1$. 
This identity becomes a $q$-analogue of Dougall's identity upon replacing $a$, $b$, $c$, and $d$ by $q^a$, $q^b$, $q^c$, and $q^d$, respectively. 
We recall Dougall's identity below (see, e.g., \cite[Corollary~3.5.2]{AndrewsAskeyRoy1999}, \cite[(2.1.7)]{GasperRahman2004}): 
\begin{align*}
&{}_5F_4\!\left[
\begin{matrix}
a, \frac{1}{2}a+1, b, c, d \\
\frac{1}{2}a, a-b+1, a-c+1, a-d+1
\end{matrix}
;1
\right] \\
&\qquad 
= \frac{\Gamma(a-b+1) \Gamma(a-c+1)}{\Gamma(a+1) \Gamma(a-b-c+1)} \frac{\Gamma(a-d+1) \Gamma(a-b-c-d+1)}{\Gamma(a-b-d+1) \Gamma(a-c-d+1)}, 
\end{align*}
where $\operatorname{Re}(a-b-c-d+1)>0$. 
Letting $d \to -\infty$, we obtain
\begin{equation}\label{labellimitDougall}
  {}_4F_3\!\left[
\begin{matrix}
a, \frac{1}{2}a+1, b, c \\
\frac{1}{2}a, a-b+1, a-c+1
\end{matrix}
;-1
\right]  
= \frac{\Gamma(a-b+1) \Gamma(a-c+1)}{\Gamma(a+1) \Gamma(a-b-c+1)}. 
\end{equation}
For this identity, see, e.g., \cite[Corollary~3.5.3]{AndrewsAskeyRoy1999}. 
By replacing $a$ and $b$ in Theorems~\ref{variant2} and \ref{variant3} with $q^a$ and $q^b$, respectively, each theorem yields a distinct $q$-analogue of \eqref{labellimitDougall} with $c = \frac{1}{2}(a - 1)$. 

\subsection{A $q$-analogue of Cantarini's formula}\label{secCantarini}
 The $\Gamma$-function may be defined via an Euler integral
 so that $\Gamma(x) = \int_{0}^{\infty} t^{x-1} e^{-t} \, dt$ 
 for $\Re(x) > 0$. In the first author's previous work on Gosper's strange series 
 \cite{Campbell2023}, it was suggested how
 the modified Abel lemma could be applied in relation to the evaluation 
$$
 \sum_{n=0}^{\infty} \left( -\frac{1}{64} \right)^{n}
 \binom{2n}{n}^{3} \frac{(4n+1)^2}{(4n-1)(4n+3)} 
 =-\frac{32(2 + \sqrt{2}) \Gamma^{2}\left( \frac{1}{4} \right) }{ \Gamma^{4}\left( \frac{1}{8} \right) } 
$$ 
 introduced by Cantarini in 2022 using relations on Clebsch--Gordan-type integrals \cite{Cantarini2022}. This is of interest in terms 
 of how the Cantarini formula above closely relates to the Bauer--Ramanujan formula $$ \sum_{n=0}^{\infty} \left( -\frac{1}{64} 
 \right)^{n} \binom{2n}{n}^{3} (4n+1) = \frac{2}{\pi} $$ and its rich history \cite{CampbellLevrie2024}. 
 This leads us to introduce a $q$-analogue of the Cantarini formula above, 
 following a similar approach as in our proof of Theorem \ref{newGosper}. 

\begin{Thm}
 Let $\rho_{1}(n) = -8 q^{2 n}-7 q^{2 n+1}-6 q^{2 n+2}-9 q^{2 n+3}-4 q^{2 n+4}-q^{2 n+5}+2 q^{2 n+6}+q^{2 n+7}+4 q^{4 n+1}+8 q^{4 n+2}+4 q^{4 n+3}+16 q$ and let 
 $\rho_2(n) $ $ =$ 
 $ -11 q^{2 n+2}$ $+$ 
 $2 q^{2 n+3}$ $-$ 
 $2 q^{2 n+5}$ $-$
 $q^{2 n+6}$ 
 $-$ 
$ q^{4 n+3} $
  $ + $ $12 q^{4 n+4}$ 
 $+$   $q^{4 n+5}$ 
 $-$
 $q^{4 n+6}$ 
 $+$
 $q^{4 n+8}$
 $-$
 $4 q^{6 n+6}$
 $+$
 $q^4$  $+$
 $q^3$  $-$
 $q^2$
 $-$
 $q+4$. Then the equality of 
$$
 \sum_{n=0}^{\infty} (-1)^{n} \left[
\begin{matrix}
 q, q, q \\ 
 q^2, q^2,q^2 
\end{matrix}
;q^2 
\right]_{n} \frac{\rho_1(n)}{\left(q^{2 n}+q^{2 n+1}-2 q\right) \left(q^{2 n+1}+q^{2 n+2}-2\right)} 
$$
 and 
\begin{align*} 
 \frac{ (1-q)(1+q)^2 q}{2} \left[
\begin{matrix}
 q, q, q \\ 
 q^2, q^2, q^2 
\end{matrix}
;q^2 
\right]_{\infty} 
&- (1+q)^2 q^2 \\
&+  \sum_{n=0}^{\infty} (-1)^n 
 \left[
\begin{matrix}
 q, q, q \\ 
 q^2, q^2, q^2 
\end{matrix}
;q^2 
\right]_{n} 
 \frac{\rho_2(n)}{\left(1-q^{n+1}\right)^3 \left(q^{n+1}+1\right)^3}
\end{align*}
 holds. 
\end{Thm}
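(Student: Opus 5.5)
We apply the modified Abel lemma \eqref{split2} in base $q^2$, in the same way as in the proof of Theorem \ref{newGosper}. Write
\[
T_n=(-1)^n\left[\begin{matrix} q,q,q\\ q^2,q^2,q^2\end{matrix};q^2\right]_n,
\]
a $q$-analogue of $(-1/64)^n\binom{2n}{n}^3$. Take $B_n=T_{n-1}$, so that $r_1(n)=B_{n+1}/B_n=-(1-q^{2n-1})^3/(1-q^{2n})^3$. Take $A_n$ to be a rational function of $q^{2n}$ whose denominator is a $q$-analogue of $(4n-1)$ or $(4n+3)$, namely a factor such as $q^{2n}+q^{2n+1}-2q$ or $q^{2n+1}+q^{2n+2}-2$. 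These are exactly the factors in the denominator of the left-hand summand, up to an index shift, and both vanish to first order at $q=1$ with derivative proportional to $4n\mp$const. Following the prescription attached to \eqref{rewritesummand}, $A_n=C_nr_2(n)$ with $r_2$ a ratio of linear expressions in $q^{2n}$ and undetermined coefficients $a_1,\dots,a_4$.

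Next we compute both sides of \eqref{split2}. On the left, $\nabla A_n=A_n-A_{n-1}$ puts the two consecutive linear denominators together. Multiplying by $B_n$ gives terms of the form $T_{n-1}\rho_1/\big((q^{2n}+q^{2n+1}-2q)(q^{2n+1}+q^{2n+2}-2)\big)$ after reindexing. On the right, $A_n\dotDelta B_n=B_nA_n(1-r_1(n))$, and $1-r_1(n)=\big((1-q^{2n})^3+(1-q^{2n-1})^3\big)/(1-q^{2n})^3$. Partial fractions in the variable $q^{2n}$ then split $r_2(n)(1-r_1(n))$ into three pieces:
\begin{itemize}
\item a term carrying $1/(\text{linear factor of } r_2)$;
\item terms with denominator $(1-q^{2n})^3$, which, since $1-q^{2n}=(1-q^n)(1+q^n)$, is the $(1-q^{n+1})^3(1+q^{n+1})^3$ appearing with $\rho_2$ after the shift;
\item a polynomial part.
\end{itemize}
We choose $a_1,\dots,a_4$ so that the first coefficient vanishes. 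This is the same kill-the-pole step as in the proof of Theorem \ref{newGosper}.

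The constant terms are then read off. The term $\lim_m A_mB_{m+1}$ gives the infinite product: since $A_m$ tends to a nonzero constant ratio of leading coefficients as $q^{2m}\to0$, and $T_m\to(-1)^m$ times the product, the right choice of $r_2$ (or a telescoping recombination of the polynomial part) should yield $\tfrac{(1-q)(1+q)^2q}{2}[q,q,q;q^2,q^2,q^2;q^2]_\infty$. We check this, including the sign and parity behaviour. The term $A_0B_1$ gives the constant $-(1+q)^2q^2$. Reindexing both sums to start at $n=0$ yields the stated identity, with $\rho_1$ and $\rho_2$ defined as the resulting numerators.

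The main obstacle is getting the ansatz for $r_2$ right so that the cancellation works and reproduces exactly the stated $\rho_1,\rho_2$ and the constants. The polynomial bookkeeping, with degree up to $q^{6n}$ in $\rho_2$, is heavy and would be verified by computer algebra. I would first try $r_2(n)=(a_3q^{2n}+a_4)/(a_1q^{2n}+a_2)$ with $C_n=1$, solve the one vanishing condition, and fix the remaining freedom by matching the factor $q^{2n}+q^{2n+1}-2q$. Convergence is no issue, since the $q$-factorials are bounded for $|q|<1$ and $\rho_2/(1-q^{n+1})^3(1+q^{n+1})^3$ stays bounded, so the limit and both series exist. The oscillating sign means the limit is taken along the appropriate interpretation consistent with \eqref{split2}, and this must be checked.
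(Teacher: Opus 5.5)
\textbf{Comparison with the paper.} Your set-up is the paper's, shifted by one index. The paper takes $B_n=T_n$ and $A_n=1/\bigl(\frac{1-q^{2n}}{1-q^2}+a_2\bigr)$ with $a_2=\frac{q+2}{(1+q)^2q}$, i.e.\ $A_n=K/(2-q^{2n+1}-q^{2n+2})$ with $K=(1-q)(1+q)^2q$. Your choice $B_n=T_{n-1}$, $A_n=-Kq/(q^{2n}+q^{2n+1}-2q)$ is exactly this shifted by one. The single vanishing condition already forces the real root $q^{2n}=2q/(1+q)$ of $(1-q^{2n})^3+(1-q^{2n-1})^3$, so apart from normalization nothing remains to be fixed.

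Your sketch leaves two things implicit. In the paper's indexing, $\rho_1(n)/D_1(n)=4-\nabla A_n$, where $D_1(n)$ is the left-hand denominator, and $\rho_2(n)/(1-q^{2n+2})^3=4-A_n\bigl(1-T_{n+1}/T_n\bigr)$. So $\rho_1,\rho_2$ are not simply the numerators produced by the lemma: the stated form appears only after adding $4\sum_n T_n$ to both sides. Also, the constant $-(1+q)^2q^2$ is $A_{-1}$ in the paper's indexing, which is your $A_0B_1$.

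\textbf{The gap.} Your claim that convergence is no issue is false, and this is exactly the step that is supposed to produce the product term. Since $T_n\to(-1)^n\Pi$ with $\Pi=(q;q^2)_\infty^3/(q^2;q^2)_\infty^3\neq0$, while $\rho_1/D_1\to4$ and $\rho_2/(1-q^{2n+2})^3\to4-K\neq0$, neither series in the statement converges for $0<|q|<1$. Bounded terms are not enough. Moreover, in the paper's indexing $A_mB_{m+1}\sim(-1)^{m+1}\frac{K}{2}\Pi$, so the limit in \eqref{split2} does not exist. The lemma therefore cannot be invoked, and reading the product off $\lim_m A_mB_{m+1}$ is not a valid step.

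What does hold is the finite summation by parts. With $A_N$ as in the paper's indexing and $u_n=(-1)^nT_n\to\Pi$,
\[
\sum_{n=0}^{N}(\text{left summand})-\sum_{n=0}^{N}(\text{right summand})=(-1)^NA_Nu_{N+1}-(1+q)^2q^2 .
\]
This tends to $\frac{K}{2}\Pi-(1+q)^2q^2$ only along even $N$. Along odd $N$ the product term changes sign, and under Abel or Ces\`aro summation it disappears altogether.

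To finish the argument, you must state explicitly how the divergent series are read, namely as limits of even partial sums, and pass to the limit along that subsequence. The paper's omitted-details sketch is also silent on this. Your proposal, however, asserts convergence outright and leaves the sign of the product term open, so the step yielding $\frac{(1-q)(1+q)^2q}{2}\Pi$ is not established.
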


\begin{proof}
 Setting $$ B_{n} = (-1)^{n} \left[
\begin{matrix}
 q^{\frac{1}{2}}, q^{\frac{1}{2}}, q^{\frac{1}{2}} \\ 
 q, q, q 
\end{matrix}
;q 
\right]_{n} $$ and $$A_{n} = \frac{1}{a_1 \frac{1-q^n}{1-q} + a_2}$$ in the modified Abel lemma, 
 and then setting the indeterminates $a_1$ and $a_2$ so that 
 $a_1 = 1$ and so that 
$$ a_2 = -\frac{ a_1 \left(-\sqrt{q}-2\right)}{\left(\sqrt{q}+1\right)^2 \sqrt{q}}, $$
 and then setting $q \mapsto q^2$, 
 this gives us, omitting details, an equivalent version of the desired result. 
\end{proof}

 Setting $q \to 1$ on the left-hand side, we obtain, up to a scalar multiple, a copy of the Cantarini series above. Setting $q \to 1$ on 
 the right-hand side, the resultant series, up to a scalar factor, is $$ \sum_{n=0}^{\infty} \left( -\frac{1}{64} \right)^{n} 
 \binom{2n}{n}^{3} \frac{(2 n+1) \left(4 n^2+8 n+5\right)}{(n+1)^3}, $$ and this can be evaluated in terms of the $\Gamma$-function 
 using (omitting details) the Clausen hypergeometric product identity after applying partial fraction decomposition to the rational 
 function factor in the above summand. 

\section{Conclusion}
 Omitting details, by setting $ B_{n} = x^n \frac{ \binom{2n}{n} }{16^n} $ and applying the summation technique due to Campbell
 and Cantarini \cite{CampbellCantarini2022}, an integration argument can then be used to prove evaluations such as 
  $$   \sum_{\substack{m \geq 0 \\ n \geq 1}} \frac{\binom{2n}{n}}{(m+n+1)n(n+1)} \frac{ 
 n \left( \frac{2n+1}{n+1} \right)^{m} - (n+1) \left( \frac{2n-1}{n} \right)^{m} }{8^m 16^n}  
 16 - 8 \sqrt{3} + 8\ln\left( \frac{7 \left( 2 + \sqrt{3} \right)^2}{128} \right).   $$
 We encourage a full exploration of the use of integration arguments together with the specified summation technique. Another avenue 
 to explore is given by the application of our methods 
 in relation to the $q$-series identities due to Gosper \cite{Gosper1991}
 related to the summation identity referred to as Gosper's nonlocal derangement identity.

\end{document}